\renewcommand{\labelenumi}{\rm(\theenumi)}
\theoremstyle{plain}
\newtheorem{thm}{Theorem}[section]
\newtheorem{prop}[thm]{Proposition}
\newtheorem{cor}[thm]{Corollary}
\newtheorem{lem}[thm]{Lemma}
\theoremstyle{remark}
\newtheorem{remark}{Remark}
\theoremstyle{definition}
\newcommand{\R}{\mathbb{R}}                       % real numbers
\newcommand{\N}{\mathbb{N}}                       % natural numbers
\newcommand{\lp}{\operatorname{L^p}}            % L^p space
\begin{document}

\title[Compactness of averaging operators on Banach function spaces]{Compactness of averaging operators on Banach function spaces}
\author{Katsuhisa Koshino}
\address[Katsuhisa Koshino]{Faculty of Engineering, Kanagawa University, Yokohama, 221-8686, Japan}
\email{ft160229no@kanagawa-u.ac.jp}
\subjclass[2020]{Primary 47B01; Secondary 46E30.}
\keywords{metric measure space, Banach function space, Lebesgue space, Lorentz space, averaging operator, compact operator}
\maketitle

\begin{abstract}
Let $X$ be a Borel metric measure space such that each closed ball is of positive and finite measure.
In this paper, we give a sufficient and necessary condition for averaging operators on a Banach function space $E(X)$ on $X$ to be compact.
As a corollary, we show that the averaging operators on the Lorentz space $L^{p,q}(X)$ of $X$ is compact if and only if $X$ is bounded,
 in the case where $X$ is a doubling and Borel-regular metric measure space with some continuity between metric and measure.
\end{abstract}

\section{Introduction}

Throughout this paper, let $X = (X,d,\mu)$ be a Borel metric measure space with a metric $d$ and a measure $\mu$ such that for any point $x \in X$ and any positive number $r > 0$, $0 < \mu(B(x,r)) < \infty$,
 where $B(x,r)$ is the closed ball centered at $x$ with radius $r$.
The symbols $\R$ and $\N$ are the set of real numbers and the one of positive integers, respectively.
Denote the space of measurable functions on $X$ by $L^0(X)$,
 and the characteristic function on a subset $A \subset X$ by $\chi_A$.
Let $E(X) = (E(X),\|\cdot\|_E) \subset L^0(X)$ be a normed linear space with a norm $\|\cdot\|_E$ satisfying the following conditions:
\begin{itemize}
 \item[(B1)] for each $f \in E(X)$, $\|f\|_E = \||f|\|_E$,
 and $\|f\|_E = 0$ if and only if $f = 0$ a.e.;
 \item[(B2)] for any $f, g \in E(X)$, if $0 \leq g \leq f$ a.e.,
 then $\|g\|_E \leq \|f\|_E$;
 \item[(B3)] for all $f, f_n \in E(X)$, $n \in \N$, if $0 \leq f_n \uparrow f$ a.e.,
 then $\|f_n\|_E \uparrow \|f\|_E$;
 \item[(B4)] for every measurable set $A \subset X$, if $\mu(A) < \infty$,
 then $\chi_A \in E(X)$ and $\|\chi_A\|_E < \infty$;
 \item[(B5)] for each measurable set $A \subset X$ with $\mu(A) < \infty$, there exists $\alpha(A) \in (0,\infty)$ such that $\int_A |f| d\mu \leq \alpha(A)\|f\|_E$ for any $f \in E(X)$.
\end{itemize}
Here we say that $E(X)$ is a \textit{Banach function space} on $X$.
It is known that $E(X)$ is a Banach space, see Theorem~3.1.3 of \cite{EE}.
From now on, fix $r > 0$.
Define $A_r : E(X) \to L^0(X)$ by for any $f \in E(X)$,
 $$A_rf(x) = \frac{1}{\mu(B(x,r))}\int_{B(x,r)} f(y) d\mu(y),$$
 which is called the \textit{averaging operator} on $E(X)$.\footnote{According to the condition (B5), every $f \in E(X)$ is integrable on a measurable subset of finite measure,
 and hence $A_rf \in L^0(X)$, refer to \cite[Theorem~8.3]{Ye}.}
Averaging operators play important roles in analysis.
For instance, using averaging operators, P.~G\'{o}rka and A.~Macios \cite{GM}, and the author \cite{Kos21} characterized compact sets in Lebesgue spaces on metric measure spaces,
 which are generalizations of the Kolmogorov-Riesz theorem \cite{Kol,R}.
The boundedness of averaging operators on Lebesgue spaces of metric measure spaces are studied in the paper \cite{Al}.
In this paper, we shall give a sufficient and necessary condition for the averaging operator to be compact in the case that $A_r(E(X)) \subset E(X)$.

A metric measure space $X$ has \textit{the Vitali covering property} provided that the following is satisfied:
\begin{itemize}
 \item Let $A$ be any subset of $X$ and $\mathcal{B}$ be any family of closed balls centered in $A$ with uniformly bounded radii such that for every $x \in A$, there is a ball belonging to $\mathcal{B}$ centered at $x$ and
 $$\inf\{s > 0 \mid B(x,s) \in \mathcal{B}\} = 0.$$
 Then there is a subfamily $\mathcal{B}' \subset \mathcal{B}$ consisting of pairwise disjoint countable closed balls such that $\mu(A \setminus \bigcup \mathcal{B}') = 0$.
\end{itemize}
It is said that a norm $\|\cdot\|_E$ on $E(X)$ is \textit{absolutely continuous} if $\|f\chi_{A_i}\|_E \to 0$ as $i \to \infty$ for each $f \in E(X)$ whenever $\{A_i\}_{i \in \N}$ is a sequence of measurable sets in $X$ such that $\chi_{A_i} \to 0$ a.e..
In the case where $\mu(X) < \infty$, $\|\cdot\|_E$ is called to be \textit{absolutely continuous with respect to $1$} provided that the above convergence holds for the constant $1$.
As is easily observed,
 if $\|\cdot\|_E$ is absolutely continuous with respect to $1$,
 then for any $\epsilon > 0$, there is $\delta > 0$ such that for any measurable subset $A \subset X$ with $\mu(A) < \delta$, $\|\chi_A\|_E < \epsilon$, see \cite[Lemma~3.1.13]{EE}.
We will give a sufficient condition for the compactness of averaging operators as follows:

\begin{thm}\label{suff.}
Suppose that $A_r(E(X)) \subset E(X)$ and the following conditions hold:
\begin{enumerate}
 \item $X$ satisfies the Vitali covering property;
 \item $\inf\{\mu(B(x,r)) \mid x \in X\} > 0$;
 \item for each $x \in X$, $\mu(B(y,r)) \to \mu(B(x,r))$ and $\alpha(B(x,r) \triangle B(y,r)) \to 0$ as $y \to x$;\footnote{For subsets $A, B \subset X$, put $A \triangle B = (A \setminus B) \cup (B \setminus A)$.}
 \item the norm $\|\cdot\|_E$ is absolutely continuous with respect to $1$.
\end{enumerate}
If $\mu(X) < \infty$,
 then $A_r : E(X) \to E(X)$ is compact.
\end{thm}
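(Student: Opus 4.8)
The plan is to realize $A_r$ as a limit, in the operator norm, of bounded finite-rank operators $E(X)\to E(X)$; since $E(X)$ is a Banach space this forces $A_r$ to be compact. The approximants will have the form
\[
\tilde A f=\sum_{i=1}^{N}(A_rf)(x_i)\,\chi_{X_i},
\]
where $\{X_i\}_{i=1}^N$ is a finite measurable partition of $X$ and $x_i\in X_i$. Each such $\tilde A$ maps $E(X)$ into itself (because $\chi_{X_i}\in E(X)$ by (B4)), has rank $\le N$, and is bounded since $f\mapsto(A_rf)(x_i)=\mu(B(x_i,r))^{-1}\int_{B(x_i,r)}f\,d\mu$ is a bounded functional by (B5) together with (2). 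So everything reduces to choosing, for a prescribed $\epsilon>0$, a partition making $\|A_r-\tilde A\|\le\epsilon$.

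First I would record two estimates, both obtained by pulling $\|f\|_E$ out via (B5) (applied to $X$, which is legitimate since $\mu(X)<\infty$). Writing $c=\inf_{x\in X}\mu(B(x,r))>0$ by (2), one has $|A_rf(x)|\le(\alpha(X)/c)\|f\|_E=:M\|f\|_E$ for every $x$. More importantly, splitting
\[
A_rf(x)-A_rf(z)=\frac{1}{\mu(B(x,r))}\Bigl(\int_{B(x,r)}f-\int_{B(z,r)}f\Bigr)+\Bigl(\frac{1}{\mu(B(x,r))}-\frac{1}{\mu(B(z,r))}\Bigr)\int_{B(z,r)}f
\]
and bounding each integral by (B5) gives, for $\|f\|_E\le1$,
\[
|A_rf(x)-A_rf(z)|\le\frac{\alpha\bigl(B(x,r)\triangle B(z,r)\bigr)}{c}+\frac{|\mu(B(x,r))-\mu(B(z,r))|}{c^2}\,\alpha(X).
\]
The key point is that the right-hand side is independent of $f$ and, by hypothesis (3) and the symmetry of $\triangle$, tends to $0$ as $x\to z$. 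Hence for every $z\in X$ and every $\epsilon'>0$ there is $\rho(z)>0$ such that $d(x,z)<\rho(z)$ forces this quantity to be $<\epsilon'$.

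Next I would manufacture the partition using the Vitali covering property. Apply it with $A=X$ to the fine family $\mathcal B=\{B(z,s)\mid z\in X,\ 0<s<\min(\rho(z),1)\}$ (it has uniformly bounded radii and, at each $z$, admissible radii whose infimum is $0$), obtaining a countable pairwise-disjoint subfamily $\{B(z_i,s_i)\}_i$ with $\mu(X\setminus\bigcup_iB(z_i,s_i))=0$. Disjointness and $\mu(X)<\infty$ give $\sum_i\mu(B(z_i,s_i))\le\mu(X)<\infty$, so for a given $\delta>0$ one may fix $N$ with $\sum_{i\ge N}\mu(B(z_i,s_i))<\delta$; put $X_i=B(z_i,s_i)$, $x_i=z_i$ for $i<N$, and $X_N=X\setminus\bigcup_{i<N}X_i$ (so $\mu(X_N)<\delta$) with $x_N$ any point of $X_N$. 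For $x\in X_i$ with $i<N$ we have $d(x,z_i)<\rho(z_i)$, so $|A_rf(x)-\tilde Af(x)|=|A_rf(x)-A_rf(x_i)|<\epsilon'$ whenever $\|f\|_E\le1$; on $X_N$ we use only $|A_rf(x)-\tilde Af(x)|\le2M$. Hence $|A_rf-\tilde Af|\le\epsilon'\chi_X+2M\chi_{X_N}$ pointwise, and (B1)--(B2) give $\|A_rf-\tilde Af\|_E\le\epsilon'\|\chi_X\|_E+2M\|\chi_{X_N}\|_E$ for $\|f\|_E\le1$. Taking $\epsilon'=\epsilon/(2\|\chi_X\|_E)$ and then, via the stated consequence of absolute continuity with respect to $1$ (cf.\ \cite[Lemma~3.1.13]{EE}), choosing $\delta$ so that $\mu(A)<\delta$ implies $\|\chi_A\|_E<\epsilon/(4M)$, we get $\|A_r-\tilde A\|\le\epsilon$, and we are done.

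The main obstacle, I expect, is structural: conditions (2) and $\mu(X)<\infty$ do \emph{not} make $X$ totally bounded, so one cannot naively cut $X$ into finitely many pieces of small diameter. The Vitali covering property is exactly what supplies a disjoint, arbitrarily fine cover; truncating it to finitely many balls leaves a remainder of small measure, and condition (4) converts ``small measure'' into ``small $E$-norm'', which closes the estimate. A minor but essential discipline throughout is to keep every bound uniform over the unit ball of $E(X)$, which is why (B5) is used to discharge $\|f\|_E$ before invoking the continuity hypothesis (3).
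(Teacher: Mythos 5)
Your proof is correct and follows essentially the same route as the paper: the same equicontinuity estimate from condition (3) and (B5), the same use of the Vitali covering property to produce finitely many disjoint fine balls plus a small-measure remainder handled by condition (4). The only difference is packaging: you deduce compactness by exhibiting $A_r$ as an operator-norm limit of the finite-rank operators $f\mapsto\sum_i(A_rf)(x_i)\chi_{X_i}$, whereas the paper builds an explicit finite $\epsilon$-net of simple functions by additionally discretizing the bounded value sets $\{A_rf(x_i)\mid f\in F\}$; your version renders that last discretization step unnecessary.
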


For a positive number $s > 0$, a metric measure space $X$ is \textit{$s$-doubling} if there is $\gamma \geq 1$ such that $\mu(B(x,2s)) \leq \gamma\mu(B(x,s))$ for every point $x \in X$.
Recall that $X$ is $s$-doubling if and only if $\inf\{\mu(B(x,s))/\mu(B(x,2s)) \mid x \in X\} > 0$.

\begin{remark}
We say that a metric measure space $X$ is \textit{doubling} if there exists $\gamma \in [1,\infty)$ such that $\mu(B(x,2s)) \leq \gamma\mu(B(x,s))$ for all points $x \in X$ and all positive number $s \in (0,\infty)$.
Clearly, $X$ is $s$-doubling for any $s > 0$ if it is doubling.
It is well known that $X$ has the Vitali covering porperty when it is doubling and \textit{Borel-regular},
 which means that each subset of $X$ is contained in some Borel set with the same measure, see the Vitali covering theorem \cite[Theorem~6.20]{Ye}.
\end{remark}

The boundedness of $X$ is necessary for $A_r$ to be compact.

\begin{thm}\label{nec.}
Suppose that $A_r(E(X)) \subset E(X)$ and the following conditions are satisfied:
\begin{enumerate}
 \item $X$ is $r$-doubling;
 \item $\sup\Big\{\Big\|\frac{\alpha(B(x,r))\chi_{B(x,2r)}}{\mu(B(x,r))}\Big\|_E \ \Big| \ x \in X\Big\} < \infty$.
\end{enumerate}
If $A_r$ is compact,
 then $X$ is bounded.
\end{thm}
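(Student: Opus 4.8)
The plan is to prove the contrapositive: assuming $X$ is unbounded, I will produce a bounded sequence in $E(X)$ whose image under $A_r$ has no convergent subsequence. Since every closed ball is a bounded set and a finite union of bounded sets is bounded, an unbounded $X$ cannot be exhausted by finitely many balls; hence one can choose inductively points $x_n \in X$, $n \in \N$, with $d(x_n,x_m) > 6r$ whenever $n \neq m$ (at stage $n$ pick $x_n$ outside the bounded set $\bigcup_{i<n}B(x_i,6r)$). For each $n$ set
\[
  f_n \;=\; \frac{\alpha(B(x_n,r))}{\mu(B(x_n,r))}\,\chi_{B(x_n,2r)},
\]
which lies in $E(X)$ by (B4) since $\mu(B(x_n,2r))<\infty$. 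Its norm is exactly the quantity occurring inside the supremum of hypothesis~(2), so $\|f_n\|_E \le M$ for every $n$, where $M$ is that finite supremum; thus $(f_n)_{n\in\N}$ is bounded in $E(X)$.

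Next I would analyze $A_r f_n$. If $z \in B(x_n,r)$, then $B(z,r)\subseteq B(x_n,2r)$ by the triangle inequality, so the average of $\chi_{B(x_n,2r)}$ over $B(z,r)$ equals $1$, whence $A_r f_n \equiv \alpha(B(x_n,r))/\mu(B(x_n,r))$ on the whole ball $B(x_n,r)$; on the other hand, if $d(z,x_n)>3r$, then $B(z,r)\cap B(x_n,2r)=\emptyset$ and $A_r f_n(z)=0$. Consequently $A_r f_n\ge 0$, it is constant and positive on $B(x_n,r)$, and its support is contained in $B(x_n,3r)$; by the choice of the points $x_n$ these supporting balls are pairwise disjoint. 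For the decisive lower bound, (B2) applied to $0\le (A_rf_n)\chi_{B(x_n,r)}\le A_rf_n$ gives
\[
  \|A_rf_n\|_E \;\ge\; \bigl\|(A_rf_n)\chi_{B(x_n,r)}\bigr\|_E
  \;=\; \frac{\alpha(B(x_n,r))}{\mu(B(x_n,r))}\,\bigl\|\chi_{B(x_n,r)}\bigr\|_E,
\]
while (B5) with $A=B(x_n,r)$ and $f=\chi_{B(x_n,r)}$ yields $\mu(B(x_n,r))\le \alpha(B(x_n,r))\,\|\chi_{B(x_n,r)}\|_E$. Multiplying these two estimates gives $\|A_rf_n\|_E\ge 1$ for every $n$.

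Finally I would combine everything. For $n\neq m$ the nonnegative functions $A_rf_n$ and $A_rf_m$ have disjoint supports, so $0\le A_rf_n\le |A_rf_n-A_rf_m|$ a.e., and (B1)--(B2) give $\|A_rf_n-A_rf_m\|_E\ge\|A_rf_n\|_E\ge 1$. Hence no subsequence of $(A_rf_n)_{n\in\N}$ is Cauchy, so none converges in the Banach space $E(X)$; since $(f_n)_{n\in\N}$ is bounded, this contradicts the compactness of $A_r\colon E(X)\to E(X)$, and therefore $X$ must be bounded. The only non-mechanical step is the choice of the test functions: normalizing $\chi_{B(x_n,2r)}$ by $\alpha(B(x_n,r))/\mu(B(x_n,r))$ is precisely what makes hypothesis~(2) bound $\|f_n\|_E$ and, via (B5), what forces the uniform lower bound $\|A_rf_n\|_E\ge 1$, while the radius $2r$ is chosen so that $A_rf_n$ is flat on $B(x_n,r)$; everything else is routine verification.
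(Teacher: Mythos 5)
Your proof is correct and follows the same overall strategy as the paper: assuming $X$ unbounded, pick a well-separated sequence $\{x_n\}$, use exactly the same test functions $f_n = \alpha(B(x_n,r))\chi_{B(x_n,2r)}/\mu(B(x_n,r))$ (uniformly bounded in $E(X)$ by hypothesis (2)), and show the images $A_rf_n$ are uniformly separated because they are nonnegative with pairwise disjoint supports, contradicting compactness. The only real difference is how the separation is quantified. The paper bounds $A_rf_n$ from below on $B(x_n,r)$ by $c\,\alpha(B(x_n,r))/\mu(B(x_n,r))$ with $c=\inf_x \mu(B(x,r))/\mu(B(x,2r))$ and then applies (B5) to $A_rf_n-A_rf_m$ over $B(x_n,r)$ to get $\|A_rf_n-A_rf_m\|_E\ge c$; the $r$-doubling hypothesis (1) enters only to guarantee $c>0$. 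You instead note that $B(z,r)\subseteq B(x_n,2r)$ for $z\in B(x_n,r)$, so $A_rf_n$ equals the constant $\alpha(B(x_n,r))/\mu(B(x_n,r))$ there, and you extract $\|A_rf_n\|_E\ge 1$ from (B2) together with (B5) applied to $\chi_{B(x_n,r)}$. This is marginally sharper: your separation constant is $1$ rather than $c$, and your argument never invokes the $r$-doubling hypothesis, so it in fact shows that condition (1) is not needed for this implication.
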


Given any $f \in L^0(X)$, we define a function $\mu_f : [0,\infty) \to [0,\infty]$ by
 $$\mu_f(t) = \mu(\{x \in X \mid |f(x)| > t\}),$$
 which is called the \textit{distribution function} of $f$.
The \textit{rearrangement} of $f$ is a function $f^\ast : [0,\infty) \to [0,\infty]$ defined by
 $$f^\ast(t) = \inf\{s \in [0,\infty) \mid \mu_f(s) \leq t\}.$$
For $p, q \in [1,\infty]$, let $L^{p,q}(X)$ be the space consisting of functions $f \in L^0(X)$ satisfying
 $$\bigg(\int_0^\infty t^{q/p - 1}f^\ast(t)^q dt\bigg)^{1/q} < \infty \text{ if } q < \infty, \text{ or } \sup_{t \in (0,\infty)} t^{1/p}f^\ast(t) < \infty \text{ if } q = \infty,$$
 which we call the \textit{Lorentz space} on $X$.
This was introduce in \cite{L1,L2}.
For each $f \in L^{p,q}(X)$, put
 $$\|f\|_{p,q} = \left\{
 \begin{array}{ll}
  \bigg(\int_0^\infty t^{q/p - 1}f^\ast(t)^q dt\bigg)^{1/q} &\text{if } q < \infty,\\
  \sup_{t \in (0,\infty)} t^{1/p}f^\ast(t) &\text{if } q = \infty.
 \end{array}
 \right.$$
Note that $L^{p,p}(X) = (L^{p,p}(X),\|\cdot\|_{p,p})$ is coincident with the Lebesgue space $L^p(X) = (L^p(X),\|\cdot\|_p)$,
 which is a Banach function space, according to the definition and \cite[Proposition~3.2.5]{EE}.
It is known that $\|\cdot\|_{p,q}$ is a norm on $L^{p,q}(X)$ and $L^{p,q}(X) = (L^{p,q}(X),\|\cdot\|_{p,q})$ is a Banach function space in the case that $1 \leq q \leq p$, refer to Theorem~4.3 of \cite{BS}.
Moreover, let
 $$f^{\ast\ast}(t) = \frac{1}{t}\int_0^t f^\ast(s) ds$$
 for each $t > 0$,
 and let
 $$\|f\|_{(p,q)} = \left\{
 \begin{array}{ll}
  \bigg(\int_0^\infty t^{q/p - 1}f^{\ast\ast}(t)^q dt\bigg)^{1/q} &\text{if } q < \infty,\\
  \sup_{t \in (0,\infty)} t^{1/p}f^{\ast\ast}(t) &\text{if } q = \infty.
 \end{array}
 \right.$$
 so $\|\cdot\|_{(p,q)}$ is an equivalent norm to $\|\cdot\|_{p,q}$ and $L^{p,q}(X) = (L^{p,q}(X),\|\cdot\|_{(p,q)})$ is also a Banach function space if $p > 1$ and $q \geq 1$, see \cite[Lemma~3.4.6 and Theorem~3.4.7]{EE}.
From now on, the space $L^{p,q}(X) = (L^{p,q}(X),\|\cdot\|_E)$ is equipped with either of the above norms $\|\cdot\|_{p,q}$ or $\|\cdot\|_{(p,q)}$.\footnote{When $q < p = \infty$,
 $L^{p,q}(X) = \{0\}$.
 There are not always norms equivalent to $\|\cdot\|_{p,q}$ if $1 = p < q \leq \infty$.}
As a corollary of Theorems~\ref{suff.} and \ref{nec.}, we can establish the following:

\begin{cor}\label{equiv.lorentz}
Let $X$ be an $s$-doubling metric measure space having the Vitali covering property for all $s \geq r$ such that for each point $x \in X$, $\mu(B(x,r) \triangle B(y,r)) \to 0$ as $y \to x$,
 and let $p \in (1,\infty)$ and $q \in [1,\infty]$.
Then $A_r : L^{p,q}(X) \to L^{p,q}(X)$ is compact if and only if $X$ is bounded.
\end{cor}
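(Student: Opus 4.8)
The plan is to deduce the corollary from Theorems~\ref{suff.} and \ref{nec.} applied to the Banach function space $E(X) = L^{p,q}(X)$; this is admissible because, for $p \in (1,\infty)$ and $q \in [1,\infty]$, $L^{p,q}(X)$ endowed with either $\|\cdot\|_{p,q}$ (when $q \le p$) or $\|\cdot\|_{(p,q)}$ is a Banach function space by the results quoted above. Write $p' = p/(p-1) \in (1,\infty)$ for the conjugate exponent. Everything will be reduced to two elementary scaling facts for a measurable set $A \subset X$ with $\mu(A) < \infty$. First, since $\chi_A^\ast = \chi_{[0,\mu(A))}$ (and $\chi_A^{\ast\ast}(t) = \min\{1,\mu(A)/t\}$), a direct evaluation of the defining integrals shows $\|\chi_A\|_E = c_1\,\mu(A)^{1/p}$ for a constant $c_1 = c_1(p,q) > 0$ (depending also on the chosen norm). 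Second, one may take in condition~(B5) the value $\alpha(A) = c_2\,\mu(A)^{1/p'}$ with $c_2 = c_2(p,q) > 0$: indeed $\int_A |f|\,d\mu = \int_0^\infty (f\chi_A)^\ast(t)\,dt \le \int_0^{\mu(A)} f^\ast(t)\,dt$, and applying H\"older's inequality on $(0,\mu(A))$ against the weight $dt/t$ (or, for $q = \infty$, the pointwise bound $f^\ast(t) \le \|f\|_{p,\infty}\,t^{-1/p}$) gives $\int_0^{\mu(A)}f^\ast(t)\,dt \le c_2\,\mu(A)^{1/p'}\|f\|_E$; here $p > 1$ is precisely what makes the exponent $1/p'$ positive. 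Equivalently, one may invoke the identification $(L^{p,q}(X))' = L^{p',q'}(X)$.

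For the ``only if'' part, assume $A_r : L^{p,q}(X) \to L^{p,q}(X)$ is compact, so in particular $A_r(L^{p,q}(X)) \subset L^{p,q}(X)$. Condition~(1) of Theorem~\ref{nec.} holds since $X$ is $r$-doubling, say with constant $\gamma \ge 1$. For condition~(2), the two scaling facts together with $1/p' - 1 = -1/p$ give, for every $x \in X$,
\[
\Big\|\frac{\alpha(B(x,r))\chi_{B(x,2r)}}{\mu(B(x,r))}\Big\|_E
= c_1 c_2\,\mu(B(x,r))^{-1/p}\mu(B(x,2r))^{1/p}
= c_1 c_2\Big(\frac{\mu(B(x,2r))}{\mu(B(x,r))}\Big)^{1/p}
\le c_1 c_2\,\gamma^{1/p},
\]
so the supremum in condition~(2) is finite, and Theorem~\ref{nec.} yields that $X$ is bounded.

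For the ``if'' part, assume $X$ is bounded. Then $\mu(X) < \infty$, because $X = B(x_0,D)$ for any fixed $x_0$ and any $D \ge \diam X$, and closed balls have finite measure. Choose $n \in \N$ with $2^n r \ge \diam X$; then $B(x,2^n r) = X$ for all $x$, and iterating the $2^k r$-doubling inequalities for $k = 0,\dots,n-1$ gives $\mu(X) \le \big(\prod_{k=0}^{n-1}\gamma_{2^k r}\big)\mu(B(x,r))$, hence $m := \inf_{x \in X}\mu(B(x,r)) > 0$. Consequently $|A_rf(x)| \le m^{-1}\int_X|f|\,d\mu \le m^{-1}\alpha(X)\|f\|_E$ for all $x$, so $A_rf$ is bounded and therefore lies in $L^{p,q}(X)$ (as $\mu(X) < \infty$); thus $A_r(L^{p,q}(X)) \subset L^{p,q}(X)$. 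It remains to check the four hypotheses of Theorem~\ref{suff.}: (1) is the assumed Vitali covering property; (2) is the bound $m > 0$ just obtained; (3) follows from the hypothesis $\mu(B(x,r)\triangle B(y,r)) \to 0$ as $y \to x$, since $|\mu(B(y,r)) - \mu(B(x,r))| \le \mu(B(x,r)\triangle B(y,r))$ and $\alpha(B(x,r)\triangle B(y,r)) = c_2\,\mu(B(x,r)\triangle B(y,r))^{1/p'}$; and (4) holds because if $\{A_i\}_{i \in \N}$ are measurable with $\chi_{A_i} \to 0$ a.e., then $\mu(\limsup_i A_i) = 0$ and, by continuity from above of $\mu$ (valid since $\mu(X) < \infty$), $\mu(A_i) \le \mu(\bigcup_{j \ge i}A_j) \to 0$, whence $\|\chi_{A_i}\|_E = c_1\,\mu(A_i)^{1/p} \to 0$; that is, $\|\cdot\|_E$ is absolutely continuous with respect to $1$. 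Theorem~\ref{suff.} then gives that $A_r : L^{p,q}(X) \to L^{p,q}(X)$ is compact, completing the proof.

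The heart of the argument is the reduction to the two identities $\|\chi_A\|_E = c_1\,\mu(A)^{1/p}$ and $\alpha(A) = c_2\,\mu(A)^{1/p'}$; the step needing the most care is establishing the latter uniformly over all admissible $(p,q)$, including the endpoints $q = 1$ and $q = \infty$ and both choices of norm on $L^{p,q}(X)$, together with the iterated-doubling estimate for $\inf_x \mu(B(x,r)) > 0$, which is the only place the $s$-doubling hypothesis for scales $s > r$ is genuinely used.
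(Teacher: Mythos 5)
Your proposal is correct and follows the paper's reduction: identify $L^{p,q}(X)$ as a Banach function space with $\|\chi_A\|_E = c_1\mu(A)^{1/p}$ and $\alpha(A) = c_2\mu(A)^{1-1/p}$ (via Hardy--Littlewood plus H\"older, exactly as in the paper), then verify the hypotheses of Theorems~\ref{suff.} and \ref{nec.}, using Lemma~\ref{doubling}-type iteration of the doubling constants to get $\mu(X)<\infty$ and $\inf_x\mu(B(x,r))>0$ in the bounded case. The one genuine divergence is how you secure $A_r(L^{p,q}(X))\subset L^{p,q}(X)$: the paper invokes Proposition~\ref{oper.bdd.}, which rests on the maximal-function-type estimates of Lemmas~\ref{distribution} and \ref{rearrange} and holds even for unbounded $X$, whereas you observe that in the ``only if'' direction well-definedness is part of the hypothesis, and in the ``if'' direction the crude bound $|A_rf|\le m^{-1}\alpha(X)\|f\|_E$ together with $\mu(X)<\infty$ already places $A_rf$ in $L^{p,q}(X)$. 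This is a legitimate and genuinely lighter route to the corollary as stated (it makes Section~3's Lemmas~\ref{distribution}--\ref{rearrange} and Proposition~\ref{oper.bdd.} unnecessary for this purpose), at the cost of not establishing the boundedness of $A_r$ on $L^{p,q}$ of a general unbounded doubling space, which the paper's approach yields as a byproduct. Two further small substitutions you make are also fine: the elementary inequality $|\mu(B(y,r))-\mu(B(x,r))|\le\mu(B(x,r)\triangle B(y,r))$ in place of the citation to \cite[Corollary~2.1]{GG}, and a direct verification of absolute continuity with respect to $1$ (via $\mu(A_i)\le\mu(\bigcup_{j\ge i}A_j)\to 0$) in place of Proposition~\ref{abs.conti.}.
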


\section{Proof of Theorems~\ref{suff.} and \ref{nec.}}

In this section, we shall prove Theorems~\ref{suff.} and \ref{nec.}.

\begin{lem}\label{equiconti.}
Let $x \in X$.
Suppose that $\mu(B(y,r)) \to \mu(B(x,r))$ and $\alpha(B(x,r) \triangle B(y,r)) \to 0$ as $y \to x$.
Then for each bounded subset $F \subset E(X)$, $\{A_rf \mid f \in F\}$ is equicontinuous at $x$.
\end{lem}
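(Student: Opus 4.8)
The plan is to estimate $|A_rf(x) - A_rf(y)|$ directly for $f$ in a bounded set $F$, say with $\|f\|_E \le M$ for all $f \in F$. First I would write, for $y$ close to $x$,
\[
A_rf(x) - A_rf(y) = \frac{1}{\mu(B(x,r))}\int_{B(x,r)} f\,d\mu - \frac{1}{\mu(B(y,r))}\int_{B(y,r)} f\,d\mu,
\]
and split the difference by inserting and subtracting the term $\frac{1}{\mu(B(y,r))}\int_{B(x,r)} f\,d\mu$. This produces two pieces: one controlled by $\bigl|\frac{1}{\mu(B(x,r))} - \frac{1}{\mu(B(y,r))}\bigr|\cdot\bigl|\int_{B(x,r)} f\,d\mu\bigr|$, and one controlled by $\frac{1}{\mu(B(y,r))}\bigl|\int_{B(x,r)} f\,d\mu - \int_{B(y,r)} f\,d\mu\bigr| \le \frac{1}{\mu(B(y,r))}\int_{B(x,r)\triangle B(y,r)} |f|\,d\mu$.

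Next I would invoke condition (B5) twice: $\int_{B(x,r)} |f|\,d\mu \le \alpha(B(x,r))\,\|f\|_E \le \alpha(B(x,r))M$, and crucially $\int_{B(x,r)\triangle B(y,r)} |f|\,d\mu \le \alpha(B(x,r)\triangle B(y,r))\,\|f\|_E \le \alpha(B(x,r)\triangle B(y,r))M$. So the whole difference is bounded by
\[
M\,\alpha(B(x,r))\,\biggl|\frac{1}{\mu(B(x,r))} - \frac{1}{\mu(B(y,r))}\biggr| + \frac{M\,\alpha(B(x,r)\triangle B(y,r))}{\mu(B(y,r))}.
\]
Now I use the hypotheses: as $y \to x$, $\mu(B(y,r)) \to \mu(B(x,r)) \in (0,\infty)$, so $\frac{1}{\mu(B(y,r))}$ stays bounded (say $\le 2/\mu(B(x,r))$ for $y$ near enough) and $\bigl|\frac{1}{\mu(B(x,r))} - \frac{1}{\mu(B(y,r))}\bigr| \to 0$; and $\alpha(B(x,r)\triangle B(y,r)) \to 0$. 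Since $M$ and $\alpha(B(x,r))$ are fixed constants (independent of the particular $f \in F$), both terms go to $0$ uniformly over $f \in F$. Given $\epsilon > 0$, this yields a single $\delta > 0$ with $|A_rf(x) - A_rf(y)| < \epsilon$ for all $y$ with $d(x,y) < \delta$ and all $f \in F$, which is exactly equicontinuity at $x$.

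The argument is essentially routine; the only point requiring a little care is confirming that each $f \in E(X)$ really is integrable over $B(x,r)$ and $B(x,r)\triangle B(y,r)$ so that the displayed integrals make sense — but this is guaranteed by (B4)–(B5) together with the standing assumption $0 < \mu(B(x,r)) < \infty$, and it is already noted in the footnote defining $A_r$. So there is no serious obstacle; the content of the lemma is exactly the packaging of (B5) with the two continuity hypotheses, with the boundedness of $F$ supplying uniformity in $f$.
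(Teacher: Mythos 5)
Your proposal is correct and follows essentially the same route as the paper's proof: the same insertion of the cross term $\frac{1}{\mu(B(y,r))}\int_{B(x,r)}f\,d\mu$, the same two applications of (B5), and the same use of the two continuity hypotheses to make both pieces small uniformly over $f\in F$. The only difference is cosmetic — the paper tracks explicit constants to land exactly on $\epsilon$, while you argue qualitatively that each term tends to $0$ with constants independent of $f$; both are fine.
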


\begin{proof}
To show the equicontinuity of $\{A_rf \mid f \in F\}$, fix any $\epsilon > 0$.
We need only to prove that there exists $\delta > 0$ such that if $d(x,y) < \delta$,
 then $|A_rf(x) - A_rf(y)| < \epsilon$ for any $f \in F$.
Let $c_1 = \sup\{\|f\|_E \mid f \in F\}$, $c_2 = \mu(B(x,r))$ and $c_3 = \alpha(B(x,r))$.
By the assmption, we can obtain a positive number $\delta > 0$ so that if $d(x,y) < \delta$,
 then $|1/\mu(B(x,r)) - 1/\mu(B(y,r))| < \epsilon/(2c_1c_3)$ and $\alpha(B(x,r) \triangle B(y,r)) < c_1c_2c_3\epsilon/(c_1(2c_1c_3 + c_2\epsilon))$.
Observe that for any $y \in X$ with $d(x,y) < \delta$,
\begin{align*}
 |A_rf(x) - A_rf(y)| &= \bigg|\frac{1}{\mu(B(x,r))}\int_{B(x,r)} f(z) d\mu(z) - \frac{1}{\mu(B(y,r))}\int_{B(y,r)} f(z) d\mu(z)\bigg|\\
 &\leq \bigg|\frac{1}{\mu(B(x,r))} - \frac{1}{\mu(B(y,r))}\bigg|\bigg|\int_{B(x,r)} f(z) d\mu(z)\bigg|\\
 &\ \ \ \ \ \ \ \ + \frac{1}{\mu(B(y,r))}\bigg|\int_{B(x,r)} f(z) d\mu(z) - \int_{B(y,r)} f(z) d\mu(z)\bigg|\\
 &\leq \bigg|\frac{1}{\mu(B(x,r))} - \frac{1}{\mu(B(y,r))}\bigg|\bigg(\int_{B(x,r)} |f(z)| d\mu(z)\bigg)\\
 &\ \ \ \ \ \ \ \ + \frac{1}{\mu(B(y,r))}\int_{B(x,r) \triangle B(y,r)} |f(z)| d\mu(z)\\
 &\leq \bigg|\frac{1}{\mu(B(x,r))} - \frac{1}{\mu(B(y,r))}\bigg|\alpha(B(x,r))\|f\|_E\\
 &\ \ \ \ \ \ \ \ + \frac{1}{\mu(B(y,r))}\alpha(B(x,r) \triangle B(y,r))\|f\|_E\\
 &< \frac{\epsilon}{2c_1c_3} \cdot c_3 \cdot c_1 + \bigg(\frac{\epsilon}{2c_1c_3} + \frac{1}{c_2}\bigg) \cdot \frac{c_1c_2c_3\epsilon}{c_1(2c_1c_3 + c_2\epsilon)} \cdot c_1 = \epsilon.
\end{align*}
The proof is completed.
\end{proof}

\begin{lem}\label{bdd.}
Let $x \in X$.
If $F$ is a bounded subset of $E(X)$,
 then the set $\{A_rf(x) \mid f \in F\}$ is bounded.
\end{lem}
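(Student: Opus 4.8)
The goal is to show that for a fixed $x \in X$ and a bounded set $F \subset E(X)$, the set of values $\{A_r f(x) \mid f \in F\}$ is bounded in $\mathbb{R}$. The key observation is that evaluating the averaging operator at a single point $x$ is, up to a fixed positive constant, nothing but integration against the characteristic function of the closed ball $B(x,r)$, and that ball has finite measure by the standing hypothesis $0 < \mu(B(x,r)) < \infty$. So condition (B5) applies directly to this ball.

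The plan is: let $c_1 = \sup\{\|f\|_E \mid f \in F\} < \infty$, which is finite since $F$ is bounded. Since $\mu(B(x,r)) < \infty$, condition (B5) gives a constant $\alpha(B(x,r)) \in (0,\infty)$ with $\int_{B(x,r)} |f| \, d\mu \leq \alpha(B(x,r)) \|f\|_E$ for every $f \in E(X)$. Then for any $f \in F$,
\[
|A_r f(x)| = \left|\frac{1}{\mu(B(x,r))}\int_{B(x,r)} f(y)\, d\mu(y)\right| \leq \frac{1}{\mu(B(x,r))}\int_{B(x,r)} |f(y)|\, d\mu(y) \leq \frac{\alpha(B(x,r))}{\mu(B(x,r))}\, c_1,
\]
and the right-hand side is a finite constant independent of $f$. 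Hence $\{A_r f(x) \mid f \in F\}$ is bounded.

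There is no real obstacle here — the only thing to be careful about is invoking (B5) with the specific measurable set $A = B(x,r)$, which is legitimate precisely because closed balls have finite measure in $X$; everything else is the triangle inequality for integrals and the boundedness of $F$. I would keep the write-up to essentially the displayed chain of inequalities above, noting the definition of $c_1$ and the appeal to (B5), and then conclude.
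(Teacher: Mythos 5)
Your argument is correct and is essentially the same as the paper's: both bound $|A_rf(x)|$ by $\frac{1}{\mu(B(x,r))}\int_{B(x,r)}|f|\,d\mu$ and then apply (B5) with $A = B(x,r)$ together with the uniform bound $\sup\{\|f\|_E \mid f \in F\}$. No issues.
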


\begin{proof}
Letting $c = \sup\{\|f\|_E \mid f \in F\}$, we have that for every $f \in F$,
\begin{align*}
 |A_rf(x)| &= \bigg|\frac{1}{\mu(B(x,r))}\int_{B(x,r)} f(y) d\mu(y)\bigg| \leq \frac{1}{\mu(B(x,r))}\int_{B(x,r)} |f(y)| d\mu(y)\\
 &\leq \frac{\alpha(B(x,r))\|f\|_E}{\mu(B(x,r))} \leq \frac{c\alpha(B(x,r))}{\mu(B(x,r))},
\end{align*}
 which implies that $\{A_rf(x) \mid f \in F\}$ is bounded.
\end{proof}

Using the above Lemmas~\ref{equiconti.} and \ref{bdd.}, we will prove Theorem~\ref{suff.}.

\begin{proof}[Proof of Theorem~\ref{suff.}]
Let $F$ be a bounded subset of $E(X)$.
To investigate the total boundedness of $\{A_rf \mid f \in F\}$ in $E(X)$, fix any $\epsilon > 0$.
Remark that $\|\chi_X\|_E < \infty$ and $\alpha(X) < \infty$.
According to Lemma~\ref{equiconti.}, for each point $x \in X$, there exists $\delta(x) > 0$ such that if $d(x,y) < \delta(x)$,
 then $|A_rf(x) - A_rf(y)| < \epsilon/(4\|\chi_X\|_E)$ for any $f \in F$.
Put $c_1 = \sup\{\|f\|_E \mid f \in F\}$ and $c_2 = \inf\{\mu(B(x,r)) \mid x \in X\}$.
Combining the Vitali covering property of $X$, the finiteness of the measure of $X$ and the absolute continuity of $\|\cdot\|_E$, we can choose pairwise disjoint finitely many closed balls $B(x_i,r_i)$ centered at $x_i \in X$ with radius $r_i < \delta(x_i)$, $1 \leq i \leq n$, so that $\|\chi_{X \setminus \bigcup_{i = 1}^n B(x_i,r_i)}\|_E < c_2\epsilon/(2c_1\alpha(X))$.
For every $i \in \{1, \cdots, n\}$, the set $\{A_rf(x_i) \mid f \in F\}$ is bounded due to Lemma~\ref{bdd.},
 so we can find a finite subset $F_i \subset \R$ such that for each $f \in F$, there is a number $a_i(f) \in F_i$ such that $|A_rf(x_i) - a_i(f)| \leq \epsilon/(4\|\chi_X\|_E)$.
Note that
 $$\Bigg\{\sum_{i = 1}^n a_i\chi_{B(x_i,r_i)} \ \Bigg| \ \text{for each } i \in \{1, \cdots, n\}, a_i \in F_i\Bigg\}$$
 is a set consisting of finitely many simple functions of $E(X)$.

Take any $f \in F$.
For every $i \in \{1, \cdots, n\}$ and every $x \in B(x_i,r_i)$, since $r_i < \delta(x_i)$,
 $$|A_rf(x) - a_i(f)| \leq |A_rf(x) - A_rf(x_i)| + |A_rf(x_i) - a_i(f)| < \frac{\epsilon}{2\|\chi_X\|_E}.$$
Hence we get that
\begin{align*}
 \Bigg\|\sum_{i = 1}^n (A_rf)\chi_{B(x_i,r_i)} - \sum_{i = 1}^n a_i(f)\chi_{B(x_i,r_i)}\Bigg\|_E &= \Bigg\|\sum_{i = 1}^n (A_rf - a_i(f))\chi_{B(x_i,r_i)}\Bigg\|_E\\
 &= \Bigg\|\sum_{i = 1}^n |A_rf - a_i(f)|\chi_{B(x_i,r_i)}\Bigg\|_E\\
 &\leq \frac{\epsilon}{2\|\chi_X\|_E}\|\chi_{\cup_{i = 1}^n B(x_i,r_i)}\|_E \leq \frac{\epsilon}{2\|\chi_X\|_E}\|\chi_X\|_E = \frac{\epsilon}{2}.
\end{align*}
On the other hand, for each point $x \in X \setminus \bigcup_{i = 1}^n B(x_i,r_i)$,
$$|A_rf(x)| = \bigg|\frac{1}{\mu(B(x,r))}\int_{B(x,r)} f(y) d\mu(y)\bigg| \leq \frac{1}{c_2}\int_X |f(y)| d\mu(y) \leq \frac{\alpha(X)\|f\|_E}{c_2} \leq \frac{c_1\alpha(X)}{c_2},$$
 and then
\begin{align*}
 \Bigg\|A_rf - \sum_{i = 1}^n (A_rf)\chi_{B(x_i,r_i)}\Bigg\|_E &= \|(A_rf)\chi_{X \setminus \bigcup_{i = 1}^n B(x_i,r_i)}\|_E \leq \frac{c_1\alpha(X)}{c_2}\|\chi_{X \setminus \bigcup_{i = 1}^n B(x_i,r_i)}\|_E\\
 &\leq \frac{c_1\alpha(X)}{c_2} \cdot \frac{c_2\epsilon}{2c_1\alpha(X)} = \frac{\epsilon}{2}.
\end{align*}
It follows that
\begin{align*}
 &\Bigg\|A_rf - \sum_{i = 1}^n a_i(f)\chi_{B(x_i,r_i)}\Bigg\|_E\\
 &\ \ \ \ \leq \Bigg\|A_rf - \sum_{i = 1}^n (A_rf)\chi_{B(x_i,r_i)}\Bigg\|_E + \Bigg\|\sum_{i = 1}^n (A_rf)\chi_{B(x_i,r_i)} - \sum_{i = 1}^n a_i(f)\chi_{B(x_i,r_i)}\Bigg\|_E < \epsilon.
\end{align*}
Consequently, $\{A_rf \mid f \in F\}$ is totally bounded,
 which means that $A_r$ is compact.
We complete the proof.
\end{proof}

Next, we show Theorem~\ref{nec.}.

\begin{proof}[Proof of Theorem~\ref{nec.}]
Let $c = \inf\{\mu(B(x,r))/\mu(B(x,2r)) \mid x \in X\}$.
Supposing that $X$ is not bounded, we can choose a sequence $\{x_n\} \subset X$ so that for all points $m, n \in \N$ with $m \neq n$, $d(x_m,x_n) > 4r$.
Define
 $$f_n = \frac{\alpha(B(x_n,r))\chi_{B(x_n,2r)}}{\mu(B(x_n,r))}.$$
According to (2) of the assumption, $\{f_n \mid n \in \N\}$ is a bounded subset of $E(X)$.
Observe that for each $n \in \N$ and each $x \in B(x_n,r)$,
\begin{align*}
 A_rf_n(x) &= \frac{1}{\mu(B(x,r))}\int_{B(x,r)} f_n(y) d\mu(y) = \frac{1}{\mu(B(x,r))}\int_{B(x,r)} \frac{\alpha(B(x_n,r))\chi_{B(x_n,2r)}(y)}{\mu(B(x_n,r))} d\mu(y)\\
 &\geq \frac{1}{\mu(B(x,2r))}\int_{B(x,r)} \frac{\alpha(B(x_n,r))\chi_{B(x_n,2r)}(y)}{\mu(B(x_n,r))} d\mu(y) = \frac{\mu(B(x,r))\alpha(B(x_n,r))}{\mu(B(x,2r))\mu(B(x_n,r))}\\
 &\geq \frac{c\alpha(B(x_n,r))}{\mu(B(x_n,r))},
\end{align*}
 and for each $x \in X \setminus B(x_n,3r)$,
 $$A_rf_n(x) = \frac{1}{\mu(B(x,r))}\int_{B(x,r)} f_n(y) d\mu(y) = \frac{1}{\mu(B(x,r))}\int_{B(x,r)} \frac{\alpha(B(x_n,r))\chi_{B(x_n,2r)}(y)}{\mu(B(x_n,r))} d\mu(y) = 0.$$
Hence for any $m, n \in \N$, if $m \neq n$,
 then
 \begin{align*}
  \|A_rf_n - A_rf_m\|_E &\geq \frac{1}{\alpha(B(x_n,r))}\int_{B(x_n,r)} |A_rf_n(y) - A_rf_m(y)| d\mu(y)\\
  &\geq \frac{1}{\alpha(B(x_n,r))}\int_{B(x_n,r)} \frac{c\alpha(B(x_n,r))}{\mu(B(x_n,r))} d\mu(y) = c.
 \end{align*}
Therefore $A_r(\{f_n \mid n \in \N\})$ is not relatively compact,
 which contradicts to the compactness of $A_r$.
As a consequent, $X$ is bounded.
\end{proof}

\section{Proof of Corollary~\ref{equiv.lorentz}}

This section is devoted to proving Corollary~\ref{equiv.lorentz}.
We have the following:

\begin{lem}\label{doubling}
The following are equivalent:
\begin{enumerate}
\renewcommand{\labelenumi}{(\roman{enumi})}
 \item $\mu(X) < \infty$ and $\inf\{B(x,r) \mid x \in X\} > 0$;
 \item $\mu(X) < \infty$ and $\inf\{B(x,s) \mid x \in X\} > 0$ for any $s \geq r$;
 \item $X$ is bounded and $s$-doubling for each $s \geq r$.
\end{enumerate}
\end{lem}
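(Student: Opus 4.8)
The plan is to prove the cyclic chain of implications $(\mathrm{i})\Rightarrow(\mathrm{iii})\Rightarrow(\mathrm{ii})\Rightarrow(\mathrm{i})$, where I read the infima appearing in $(\mathrm{i})$ and $(\mathrm{ii})$ as $\inf\{\mu(B(x,r))\mid x\in X\}$ and $\inf\{\mu(B(x,s))\mid x\in X\}$. The implication $(\mathrm{ii})\Rightarrow(\mathrm{i})$ is immediate, being the special case $s=r$, so the content is in the other two.

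For $(\mathrm{i})\Rightarrow(\mathrm{iii})$, write $c_{0}=\inf\{\mu(B(x,r))\mid x\in X\}>0$. I would first deduce that $X$ is bounded by contradiction, adapting the separation construction in the proof of Theorem~\ref{nec.}: if $X$ is unbounded, choose inductively points $x_{n}$ with $d(x_{m},x_{n})>2r$ for $m\neq n$ --- at stage $n$ the set $\bigcup_{i=1}^{n}B(x_{i},2r)$ is a finite union of sets of diameter $\leq 4r$, hence bounded, so $X$ is not covered by it. Then the closed balls $B(x_{n},r)$ are pairwise disjoint Borel sets of measure $\geq c_{0}$, forcing $\mu(X)\geq\sum_{n}\mu(B(x_{n},r))=\infty$ and contradicting $(\mathrm{i})$. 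Once $X$ is bounded, for each $s\geq r$ and each $x\in X$ we have $\mu(B(x,s))\geq\mu(B(x,r))\geq c_{0}$ and $\mu(B(x,2s))\leq\mu(X)<\infty$, hence $\mu(B(x,2s))\leq(\mu(X)/c_{0})\mu(B(x,s))$ with $\mu(X)/c_{0}\geq 1$; that is, $X$ is $s$-doubling for every $s\geq r$.

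For $(\mathrm{iii})\Rightarrow(\mathrm{ii})$, boundedness gives some $x_{0}\in X$ and $R>0$ with $X\subseteq B(x_{0},R)$, so $\mu(X)\leq\mu(B(x_{0},R))<\infty$ by the standing hypothesis on balls. Now fix $s\geq r$ and set $D=\diam X<\infty$; choose $k\in\N$ with $2^{k}s\geq D$, so that $X\subseteq B(x,2^{k}s)$ and hence $\mu(B(x,2^{k}s))=\mu(X)$ for every $x\in X$. Since $2^{j}s\geq r$ for $0\leq j<k$, condition $(\mathrm{iii})$ provides constants $\gamma_{j}\geq 1$ with $\mu(B(x,2^{j+1}s))\leq\gamma_{j}\mu(B(x,2^{j}s))$ for all $x$; iterating,
\[
  \mu(X)=\mu(B(x,2^{k}s))\leq\Big(\prod_{j=0}^{k-1}\gamma_{j}\Big)\mu(B(x,s)),
\]
so $\mu(B(x,s))\geq\mu(X)\big/\prod_{j=0}^{k-1}\gamma_{j}>0$ uniformly in $x\in X$, which together with $\mu(X)<\infty$ is exactly $(\mathrm{ii})$.

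I do not anticipate a genuine obstacle; the one step needing care is the iteration in $(\mathrm{iii})\Rightarrow(\mathrm{ii})$, where both the number of steps $k$ and the doubling constants $\gamma_{0},\dots,\gamma_{k-1}$ must be fixed \emph{independently of $x$} --- and this is precisely where boundedness is used, through $D<\infty$. The degenerate case $D=0$ (a one-point space) and the Borel-measurability of the balls $B(x_{n},r)$ in the argument above are routine and will just be noted in passing.
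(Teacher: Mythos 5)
Your proof is correct and uses essentially the same three ingredients as the paper's: the pairwise-disjoint separated balls to force boundedness from $\mu(X)<\infty$ plus the uniform lower bound, the bound $\mu(B(x,2s))\leq\mu(X)\leq\gamma\inf_x\mu(B(x,s))$ to get doubling, and the iterated doubling chain to recover the uniform lower bound from boundedness. You merely run the cycle as (i)$\Rightarrow$(iii)$\Rightarrow$(ii)$\Rightarrow$(i) instead of the paper's (i)$\Rightarrow$(ii)$\Rightarrow$(iii)$\Rightarrow$(i), which changes nothing of substance.
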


\begin{proof}
The implication (i) $\Rightarrow$ (ii) is clear.
Next, we prove (ii) $\Rightarrow$ (iii).
Assuming that $X$ is unbounded,
 so we can choose a sequence $\{x_n\} \subset X$ so that $d(x_i,x_j) > 2r$ for any $i \neq j$.
Since $\inf\{\mu(B(x,r)) \mid x \in X\} > 0$,
 $$\infty > \mu(X) \geq \mu\Bigg(\bigcup_{n \in \N} B(x_n,r)\Bigg) = \sum_{n \in \N} \mu(B(x_n,r)) \geq \aleph_0 \cdot \inf\{\mu(B(x,r)) \mid x \in X\} = \infty,$$
 which is a contradiction.
Thus $X$ is bounded.
Moreover, letting any $s \geq r$, put $\epsilon = \inf\{\mu(B(x,s)) \mid x \in X\}$.
Take $\gamma \geq 1$ such that $\mu(X) \leq \gamma\epsilon$,
 so we have that for every $z \in X$,
 $$\mu(B(z,2s)) \leq \mu(X) \leq \gamma\epsilon = \gamma\inf\{\mu(B(x,s)) \mid x \in X\} \leq \gamma\mu(B(z,s)).$$
Therefore $X$ is $s$-doubling.

Finally, the implication (iii) $\Rightarrow$ (i) will be shown by the same argument as \cite[Lemma~2.3]{Kos21}.
Indeed, since $X$ is $s$-doubling for every $s \geq r$,
 we can choose $\gamma_n \geq 1$ for each $n \in \N$ so that $\mu(B(x,2^nr)) \leq \gamma_n\mu(B(x,2^{n - 1}r))$ for all $x \in X$.
It follows from the boundedness of $X$ that there is $m \in \N$ such that $X = B(x,2^mr)$ for every point $x \in X$ and that $\mu(X) < \infty$.
Observe that
 $$\mu(B(x,r)) \geq (\gamma_1\gamma_2 \cdots \gamma_m)^{-1}\mu(B(x,2^mr)) \geq (\gamma_1\gamma_2 \cdots \gamma_m)^{-1}\mu(X) > 0,$$
 which implies (i).
\end{proof}

Combining \cite[Proposition~2.2]{GS} with the above lemma, we can establish the following proposition:

\begin{prop}\label{tot.bdd.}
The following are equivalent:
\begin{enumerate}
\renewcommand{\labelenumi}{(\roman{enumi})}
 \item $X$ is totally bounded;
 \item $\mu(X) < \infty$ and $\inf\{B(x,s) \mid x \in X\} > 0$ for every $s > 0$;
 \item $X$ is bounded and $s$-doubling for any $s > 0$.
\end{enumerate}
\end{prop}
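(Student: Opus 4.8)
The plan is to prove the cycle by establishing the two equivalences (i) $\Leftrightarrow$ (ii) and (ii) $\Leftrightarrow$ (iii) separately and chaining them: the equivalence (ii) $\Leftrightarrow$ (iii) will come out as a purely formal consequence of Lemma~\ref{doubling}, while (i) $\Leftrightarrow$ (ii) is the content of \cite[Proposition~2.2]{GS}.

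For (ii) $\Leftrightarrow$ (iii), the key observation is that the proof of Lemma~\ref{doubling} uses nothing about the fixed radius $r$ beyond its positivity, so the lemma holds verbatim with $r$ replaced by an arbitrary $s_0 > 0$: for every $s_0 > 0$, the condition ``$\mu(X) < \infty$ and $\inf\{\mu(B(x,s_0)) \mid x \in X\} > 0$'' is equivalent to ``$X$ is bounded and $s$-doubling for all $s \geq s_0$''. I would then quantify this equivalence over all $s_0 > 0$. Since the clauses ``$\mu(X) < \infty$'' and ``$X$ is bounded'' do not depend on $s_0$, they factor out of the quantifier, so the conjunction of the left-hand sides over all $s_0 > 0$ is exactly condition (ii), and the conjunction of the right-hand sides is ``$X$ is bounded and $s$-doubling for all $s_0 > 0$ and all $s \geq s_0$'', which is condition (iii) because every $s > 0$ satisfies $s \geq s_0$ for the choice $s_0 = s$. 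This settles (ii) $\Leftrightarrow$ (iii).

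For (i) $\Leftrightarrow$ (ii) I would invoke \cite[Proposition~2.2]{GS}, but the direct argument is short and worth recording. If $X$ is totally bounded then it is bounded, hence $X = B(x_0,\diam X)$ for any $x_0 \in X$ and so $\mu(X) < \infty$ by the standing assumption; and for any $s > 0$, covering $X$ by finitely many closed balls $B(y_1,s/2),\dots,B(y_k,s/2)$ with $y_j \in X$, each $x \in X$ lies in some $B(y_j,s/2) \subseteq B(x,s)$, whence $\inf\{\mu(B(x,s)) \mid x \in X\} \geq \min_{1 \leq j \leq k}\mu(B(y_j,s/2)) > 0$; this gives (i) $\Rightarrow$ (ii). Conversely, assuming (ii), fix $\epsilon > 0$ and take (by Zorn's lemma) a subset $\{x_i\}_{i \in I} \subseteq X$ maximal with respect to $d(x_i,x_j) \geq \epsilon$ for $i \neq j$. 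The closed balls $B(x_i,\epsilon/3)$ are pairwise disjoint subsets of $X$, and each has measure at least $\inf\{\mu(B(x,\epsilon/3)) \mid x \in X\} > 0$, so $I$ must be finite (otherwise $\mu(X) = \infty$, exactly as in the $\aleph_0$-computation in the proof of Lemma~\ref{doubling}); by maximality $\{x_i\}_{i \in I}$ is an $\epsilon$-net for $X$, and since $\epsilon$ was arbitrary, $X$ is totally bounded. Thus (ii) $\Rightarrow$ (i).

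I expect the only subtle point to be the quantifier manipulation in the (ii) $\Leftrightarrow$ (iii) step, together with the easy but necessary check that the proof of Lemma~\ref{doubling} never exploits the particular value of $r$; the metric estimates in the (i) $\Leftrightarrow$ (ii) step are routine, and if one simply cites \cite[Proposition~2.2]{GS} for that equivalence there is essentially no obstacle at all.
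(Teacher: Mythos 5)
Your proposal is correct and follows essentially the same route as the paper, which establishes the proposition precisely by combining \cite[Proposition~2.2]{GS} (for the equivalence with total boundedness) with Lemma~\ref{doubling} applied for every radius (for the equivalence of (ii) and (iii)). Your additional direct argument for (i) $\Leftrightarrow$ (ii) via maximal $\epsilon$-separated sets, and your check that Lemma~\ref{doubling} does not depend on the particular value of $r$, simply make explicit what the paper leaves to the reader.
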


To show the boundedness of the averaging operators on $L^{p,q}(X)$, we will use the following lemma:

\begin{lem}\label{distribution}
Let $X$ be an $s$-doubling metric measure space with the Vitali covering property for any $s \geq r$.
Then there exists $c \geq 2$ such that for each $f \in L^0(X)$ which is integrable on all measurable set of finite measure and for each $t > 0$,
 $$\mu_{A_rf}(ct) \leq \frac{1}{t}\int_{\{x \in X \mid |f(x)| > t\}} |f(x)| d\mu(x).$$
\end{lem}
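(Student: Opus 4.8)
The plan is to obtain this as a Calder\'{o}n--Zygmund-type weak estimate, combining a truncation of $f$ with a global $L^1$-bound for $A_r$ proved by a Fubini argument. Fix $t>0$ and set $\Omega=\{x\in X\mid |f(x)|>t\}$; we may assume $\int_\Omega|f|\,d\mu<\infty$, since otherwise the asserted right-hand side is infinite and there is nothing to prove, and then $b:=f\chi_\Omega$ is integrable on $X$. Write $f=g+b$ with $g:=f\chi_{X\setminus\Omega}$, so $|g|\le t$ everywhere. As $0<\mu(B(x,r))<\infty$ for every $x\in X$, we have $|A_rg(x)|\le\frac{1}{\mu(B(x,r))}\int_{B(x,r)}|g|\,d\mu\le t$ pointwise, whence, writing $A_r|b|(x):=\frac{1}{\mu(B(x,r))}\int_{B(x,r)}|b|\,d\mu\ge0$, we get $|A_rf(x)|\le t+A_r|b|(x)$. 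Consequently, for any $c\ge2$,
\[\{x\in X\mid |A_rf(x)|>ct\}\subseteq\{x\in X\mid A_r|b|(x)>(c-1)t\},\]
and it remains only to bound the measure of the latter set.

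Next I would prove the global bound $\int_X A_r|b|\,d\mu\le\gamma\int_X|b|\,d\mu$, where $\gamma\ge1$ is an $r$-doubling constant of $X$ (one exists since $X$ is $s$-doubling for all $s\ge r$). Writing $A_r|b|(x)=\int_X\frac{\chi_{B(x,r)}(y)}{\mu(B(x,r))}|b(y)|\,d\mu(y)$ and using the symmetry $\chi_{B(x,r)}(y)=\chi_{B(y,r)}(x)$, Tonelli's theorem gives
\[\int_X A_r|b|(x)\,d\mu(x)=\int_X|b(y)|\bigg(\int_{B(y,r)}\frac{d\mu(x)}{\mu(B(x,r))}\bigg)d\mu(y).\]
If $d(x,y)\le r$, then $B(y,r)\subseteq B(x,2r)$ by the triangle inequality, so $\mu(B(y,r))\le\mu(B(x,2r))\le\gamma\,\mu(B(x,r))$, i.e.\ $\mu(B(x,r))^{-1}\le\gamma\,\mu(B(y,r))^{-1}$; integrating this over $x\in B(y,r)$ shows that the inner integral above is at most $\gamma$, and the $L^1$-bound follows.

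Combining the two steps with Chebyshev's inequality once more,
\[(c-1)t\cdot\mu\bigl(\{x\in X\mid A_r|b|(x)>(c-1)t\}\bigr)\le\int_X A_r|b|\,d\mu\le\gamma\int_X|b|\,d\mu=\gamma\int_\Omega|f|\,d\mu,\]
so taking $c:=\gamma+1$ (hence $c\ge2$) makes $\gamma/(c-1)=1$, and together with the inclusion from the first step this yields $\mu_{A_rf}(ct)\le\frac1t\int_\Omega|f|\,d\mu$, as asserted.

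The argument is short, so the points requiring care are technical rather than conceptual. First, the measurability of $x\mapsto\mu(B(x,r))$ (which is already implicit in the definition of $A_r$). Second, the legitimacy of Tonelli's theorem when $X$ need not be globally $\sigma$-finite: here the relevant integrand $(x,y)\mapsto\chi_{B(x,r)}(y)\,|b(y)|/\mu(B(x,r))$ is supported in $\{(x,y)\mid y\in\Omega,\ d(x,y)\le r\}$, which, on fixing $x_0\in X$ and writing $\Omega=\bigcup_{n\in\N}(\Omega\cap B(x_0,n))$, lies inside $\bigcup_{n\in\N}\bigl(B(x_0,n+r)\times(\Omega\cap B(x_0,n))\bigr)$, a countable union of products of sets of finite measure, so Tonelli applies there. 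The one genuinely geometric ingredient --- and the step I would single out as the crux --- is the uniform bound $\int_{B(y,r)}\mu(B(x,r))^{-1}\,d\mu(x)\le\gamma$, which is exactly where $r$-doubling enters; I would remark that neither the Vitali covering property nor $s$-doubling for $s>r$ is actually needed for this lemma, though a standard $5r$-covering argument would give the same conclusion (with a larger constant $c$) using those hypotheses instead.
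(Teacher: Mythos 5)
Your proof is correct, but it follows a genuinely different route from the paper's. The paper runs the classical covering argument for weak $(1,1)$ bounds of maximal-type operators: Chebyshev on each ball $B(x,r)$ centered in the level set $Y=\{|A_rf|>ct\}$, a Vitali cover of $Y$ by disjoint small balls centered in $Y$, the basic $5r$-covering lemma to pass to disjoint balls of radius exactly $r$, and then doubling at the three scales $r,2r,4r$ to compare $\mu(B(x_j,5r))$ with $\mu(B(x_j,r))$, ending with $c=\gamma_1\gamma_2\gamma_3+1$. You instead truncate at height $t$ (a Calder\'on--Zygmund splitting $f=g+b$), observe $\|A_rg\|_\infty\le t$, and reduce to a weak $(1,1)$ bound for $A_r|b|$, which you get from Chebyshev plus the $L^1\to L^1$ bound $\int_X A_r|b|\,d\mu\le\gamma\int_X|b|\,d\mu$ proved by Tonelli and the symmetry $\chi_{B(x,r)}(y)=\chi_{B(y,r)}(x)$ together with the single inclusion $B(y,r)\subset B(x,2r)$. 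This buys you three things: the constant is the smaller $c=\gamma+1$ with $\gamma$ only an $r$-doubling constant; neither the Vitali covering property nor $s$-doubling for $s>r$ is used, as you correctly note (so your version of the lemma has strictly weaker hypotheses, which would propagate to Lemma~\ref{rearrange} and Proposition~\ref{oper.bdd.}); and the $L^1$-bound you prove en route is essentially the boundedness result of \cite{Al} cited in the introduction. The technical points you flag are handled correctly, and in fact more care than necessary is taken: under the standing assumption that every closed ball has positive finite measure, $X=\bigcup_n B(x_0,n)$ is automatically $\sigma$-finite and separable (any family of disjoint balls of positive measure inside a fixed ball is countable), so Tonelli applies directly and $\{(x,y)\mid d(x,y)\le r\}$ is product-measurable.
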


\begin{proof}
Since $X$ is $s$-doubling for all $s \geq r$,
 there is $\gamma_n \geq 1$ for each positive integer $1 \leq n \leq 3$ such that $\mu(B(x,2^nr)) \leq \gamma_n\mu(B(x,2^{n - 1}r))$ for all $x \in X$.
Put $c = \gamma_1\gamma_2\gamma_3 + 1$ and $Y = \{x \in X \mid |A_rf(x)| > ct\}$.
Then for every $x \in Y$,
 $$ct\mu(B(x,r)) < \int_{B(x,r)} |f(y)| d\mu(y) \leq \int_{B(x,r) \cap \{y \in X \mid |f(y)| > t\}} |f(y)| d\mu(y) + t\mu(B(x,r)).$$
Since $X$ has the Vitali covering property,
 we can find a pairwise disjoint countable family $\{B(x_i,r_i) \mid i \in I\}$ of closed balls,
 where $I \subset \N$, such that for each $i \in I$, $x_i \in Y$ and $r_i \leq r$,
 and $\mu(Y \setminus \bigcup_{i \in I} B(x_i,r_i)) = 0$.
By Theorem~6.6 of \cite{Ye}, there is a pairwise disjoint subfamily $\{B(x_j,r) \mid j \in J\} \subset \{B(x_i,r) \mid i \in I\}$,
 where $J \subset I$, such that $\bigcup_{i \in I} B(x_i,r) \subset \bigcup_{j \in J} B(x_j,5r)$.
Observe that
\begin{align*}
 \mu_{A_rf}(ct) &= \mu(Y) = \sum_{i \in I} \mu(Y \cap B(x_i,r_i)) \leq \sum_{i \in I} \mu(B(x_i,r_i)) = \mu\Bigg(\bigcup_{i \in I} B(x_i,r_i)\Bigg)\\
 &\leq \mu\Bigg(\bigcup_{i \in I} B(x_i,r)\Bigg) \leq \mu\Bigg(\bigcup_{j \in J} B(x_j,5r)\Bigg) \leq \sum_{j \in J} \mu(B(x_j,5r))\\
 &\leq \gamma_1\gamma_2\gamma_3\sum_{j \in J} \mu(B(x_j,r)) < \gamma_1\gamma_2\gamma_3\sum_{j \in J} \frac{1}{(c - 1)t}\int_{B(x_j,r) \cap \{y \in X \mid |f(y)| > t\}} |f(y)| d\mu(y)\\
 &\leq \frac{1}{t}\int_{\{x \in X \mid |f(x)| > t\}} |f(x)| d\mu(x).
\end{align*}
This completes the proof.
\end{proof}

Let any measurable set $A$ in $X$ with $\mu(A) < \infty$.
As is easily observed,
 $$\mu_{\chi_A}(t) = \left\{
 \begin{array}{ll}
  \mu(A) &\text{if } t < 1,\\
  0 &\text{if } t \geq 1,
 \end{array}
 \right.$$
 and hence
 $$\chi_A^\ast(t) = \left\{
 \begin{array}{ll}
  1 &\text{if } t < \mu(A),\\
  0 &\text{if } t \geq \mu(A).
 \end{array}
 \right.$$
Therefore we get that
 $$\chi_A^{\ast\ast}(t) = \frac{1}{t}\int_0^t \chi_A^\ast(s) ds = \left\{
 \begin{array}{ll}
  \frac{1}{t}\int_0^t ds = 1 &\text{if } t < \mu(A),\\
  \frac{1}{t}\int_0^{\mu(A)} ds = \frac{\mu(A)}{t} &\text{if } t \geq \mu(A).
 \end{array}
 \right.$$
By virtue of the Hardy-Littlewood inequality, see \cite[Theorem~3.2.10]{EE}, for each $f \in L^0(X)$,
 $$\int_A f(x) d\mu(x) = \int_X f(x)\chi_A(x) d\mu(x) \leq \int_0^\infty f^\ast(t)\chi_A^\ast(t) dt = \int_0^{\mu(A)} f^\ast(t) dt.$$
We have the following:

\begin{lem}\label{rearrange}
Suppose that a metric measure space $X$ has the Vitali covering property and the $s$-doubling property for every $s \geq r$.
Then there is $c \geq 2$ such that for every integrable function $f \in L^0(X)$ on each measurable subsets whose measure is finite and for every positive number $t > 0$, $(A_rf)^\ast(t) \leq cf^{\ast\ast}(t)$.
\end{lem}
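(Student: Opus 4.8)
The plan is to reduce the statement to Lemma~\ref{distribution} --- a weak-type estimate for $A_r$ --- together with the Hardy--Littlewood inequality recalled just above. I would take $c$ to be exactly the constant produced by Lemma~\ref{distribution}, which already satisfies $c \geq 2$, and fix a function $f \in L^0(X)$ that is integrable on every measurable set of finite measure, together with $t > 0$. The extreme cases are trivial: if $f^{\ast\ast}(t) = \infty$ there is nothing to prove, and if $f^{\ast\ast}(t) = 0$ then $f^\ast$ vanishes on $(0,\infty)$, so $f = 0$ a.e.\ and $(A_rf)^\ast(t) = 0$. Hence I may assume $0 < f^{\ast\ast}(t) < \infty$, and then also $f^\ast(t) \leq f^{\ast\ast}(t) < \infty$ because $f^\ast$ is nonincreasing.

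The key point is to apply Lemma~\ref{distribution} with the threshold $\tau := f^{\ast\ast}(t)$ rather than with the given $t$. Using the standard facts $f^{\ast\ast}(t) \geq f^\ast(t)$ and $\mu_f(f^\ast(t)) \leq t$ (properties of the rearrangement, see \cite{EE}) together with the monotonicity of $\mu_f$, one gets $\mu_f(\tau) \leq \mu_f(f^\ast(t)) \leq t < \infty$. Lemma~\ref{distribution} then yields
\[
\mu_{A_rf}(c\tau) \leq \frac{1}{\tau}\int_{\{x \in X \mid |f(x)| > \tau\}} |f(x)|\,d\mu(x),
\]
and applying the Hardy--Littlewood inequality to $|f|$ and the finite-measure set $A = \{x \in X \mid |f(x)| > \tau\}$ (noting $|f|^\ast = f^\ast$) bounds the right-hand integral by $\int_0^{\mu_f(\tau)} f^\ast(s)\,ds \leq \int_0^t f^\ast(s)\,ds = t f^{\ast\ast}(t) = t\tau$. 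Therefore $\mu_{A_rf}(c\tau) \leq t$, so by the definition of the rearrangement $(A_rf)^\ast(t) = \inf\{s \in [0,\infty) \mid \mu_{A_rf}(s) \leq t\} \leq c\tau = c f^{\ast\ast}(t)$, which is the desired inequality.

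I do not anticipate a real obstacle: this is essentially the classical argument deriving $(Mf)^\ast(t) \leq C f^{\ast\ast}(t)$ from the weak-$(1,1)$ bound for the Hardy--Littlewood maximal operator, with Lemma~\ref{distribution} in the role of the weak-$(1,1)$ estimate and $A_r$ in the role of $M$. The only points that need a little care are the choice $\tau = f^{\ast\ast}(t)$ (taking instead $\tau = f^\ast(t)$ would leave an uncontrollable ratio $f^{\ast\ast}(t)/f^\ast(t)$ in the bound), the disposal of the degenerate cases $f^{\ast\ast}(t) \in \{0, \infty\}$, and checking that $\{x \in X \mid |f(x)| > \tau\}$ has finite measure so that the Hardy--Littlewood inequality is applied to an admissible set --- all of which follow from the elementary properties of $f^\ast$ and $\mu_f$ used above.
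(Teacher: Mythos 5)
Your proof is correct and follows essentially the same route as the paper: apply Lemma~\ref{distribution} at the threshold $\tau = f^{\ast\ast}(t)$, use $f^{\ast\ast}(t) \geq f^\ast(t)$ and $\mu_f(f^\ast(t)) \leq t$ together with the Hardy--Littlewood inequality to conclude $\mu_{A_rf}(c\tau) \leq t$, hence $(A_rf)^\ast(t) \leq c f^{\ast\ast}(t)$. (Incidentally, you state the monotonicity inequality in the correct direction, $f^{\ast\ast}(t) \geq f^\ast(t)$, whereas the paper's displayed version of this step has the inequality reversed, though it is used correctly afterwards.)
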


\begin{proof}
Let $c \geq 2$ be a positive number as in Lemma~\ref{distribution}.
Clearly, it holds in the case where $f = 0$ a.e..
In the other case, $f^{\ast\ast} > 0$.
Due to the definition, $f^\ast$ is decreasing,
 and hence
 $$f^\ast(t) = \frac{1}{t}\int_0^t f^\ast(t) ds \geq \frac{1}{t}\int_0^t f^\ast(s) ds = f^{\ast\ast}(t).$$
Observe that
 $$\mu(\{x \in X \mid |f(x)| > f^\ast(t)\}) = \mu_f(f^\ast(t)) \leq t.$$
By Lemma~\ref{distribution},
\begin{align*}
 \mu_{A_rf}(cf^{\ast\ast}(t)) &\leq \frac{1}{f^{\ast\ast}(t)}\int_{\{x \in X \mid |f(x)| > f^{\ast\ast}(t)\}} |f(x)| d\mu(x) \leq \frac{1}{f^{\ast\ast}(t)}\int_{\{x \in X \mid |f(x)| > f^\ast(t)\}} |f(x)| d\mu(x)\\
 &\leq \frac{1}{f^{\ast\ast}(t)}\int_0^{\mu(\{x \in X \mid |f(x)| > f^\ast(t)\})} f^\ast(s) ds \leq \frac{1}{f^{\ast\ast}(t)}\int_0^t f^\ast(s) ds = t,
\end{align*}
 which implies that $(A_rf)^\ast(t) \leq cf^{\ast\ast}(t)$.
\end{proof}

Now it is shown that $A_r$ is bounded on $L^{p,q}(X)$.

\begin{prop}\label{oper.bdd.}
Let $X$ be an $s$-doubling metric measure space with the Vitali covering property for all $s \geq r$.
Suppose that $1 < p \leq \infty$ and $1 \leq q \leq \infty$.
Then there exists $c \geq 2$ such that for any $f \in L^{p,q}(X)$,
 $$\|A_rf\|_E \leq \frac{cp}{p - 1}\|f\|_E.$$
\end{prop}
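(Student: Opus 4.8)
The plan is to deduce the estimate from the pointwise rearrangement bound of Lemma~\ref{rearrange} together with the classical Hardy inequality underlying the equivalence of the two Lorentz norms. First I would take $c \geq 2$ to be the constant produced by Lemma~\ref{rearrange}. Since $(L^{p,q}(X),\|\cdot\|_{(p,q)})$ is a Banach function space for $p > 1$ and $q \geq 1$ (as recalled in the introduction), condition (B5) shows that every $f \in L^{p,q}(X)$ is integrable on each measurable set of finite measure; hence $A_rf \in L^0(X)$ and Lemma~\ref{rearrange} is applicable, yielding
$$(A_rf)^\ast(t) \leq c\,f^{\ast\ast}(t) \qquad \text{for all } t > 0.$$

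Next I would record the standard facts about the two Lorentz functionals. Write $\Lambda(\varphi) = \big(\int_0^\infty t^{q/p-1}\varphi(t)^q\,dt\big)^{1/q}$ when $q < \infty$, and $\Lambda(\varphi) = \sup_{t>0} t^{1/p}\varphi(t)$ when $q = \infty$, for a nonnegative measurable function $\varphi$ on $(0,\infty)$, so that $\|g\|_{p,q} = \Lambda(g^\ast)$ and $\|g\|_{(p,q)} = \Lambda(g^{\ast\ast})$ for $g \in L^{p,q}(X)$. The functional $\Lambda$ is monotone and positively homogeneous, and since $g^\ast$ is nonincreasing one has $g^\ast \leq g^{\ast\ast}$ pointwise, whence $\|g\|_{p,q} \leq \|g\|_{(p,q)}$. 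The one substantive ingredient is the Hardy inequality $\|g\|_{(p,q)} \leq \frac{p}{p-1}\|g\|_{p,q}$, valid because $p > 1$ and $q \geq 1$; this is exactly the estimate behind the equivalence of the two norms (see \cite[Lemma~3.4.6]{EE}). For $q = \infty$ it reduces to the one-line computation that $\varphi(s) \leq Ms^{-1/p}$ forces $\frac1t\int_0^t\varphi(s)\,ds \leq \frac{p}{p-1}Mt^{-1/p}$, and for $q < \infty$ it is the weighted Hardy inequality with weight $t^{q/p-1}$, whose constant is exactly $\frac{p}{p-1}$.

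With these in hand the proof is a short computation in each of the two admissible choices of $\|\cdot\|_E$. Using monotonicity and positive homogeneity of $\Lambda$ together with the pointwise bound above,
$$\|A_rf\|_{p,q} = \Lambda\big((A_rf)^\ast\big) \leq \Lambda\big(c\,f^{\ast\ast}\big) = c\,\Lambda(f^{\ast\ast}) = c\,\|f\|_{(p,q)}.$$
If $\|\cdot\|_E = \|\cdot\|_{p,q}$, combine this with the Hardy inequality applied to $f$ to obtain $\|A_rf\|_E = \|A_rf\|_{p,q} \leq c\,\|f\|_{(p,q)} \leq \frac{cp}{p-1}\|f\|_{p,q} = \frac{cp}{p-1}\|f\|_E$. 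If $\|\cdot\|_E = \|\cdot\|_{(p,q)}$, apply the Hardy inequality to $A_rf$ first: $\|A_rf\|_E = \|A_rf\|_{(p,q)} \leq \frac{p}{p-1}\|A_rf\|_{p,q} \leq \frac{cp}{p-1}\|f\|_{(p,q)} = \frac{cp}{p-1}\|f\|_E$. In either case $\|A_rf\|_E < \infty$, so $A_rf \in L^{p,q}(X)$ and the proposition follows with $c$ as chosen.

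I expect no genuine obstacle here: the proposition merely repackages the pointwise estimate of Lemma~\ref{rearrange} via standard Lorentz-space machinery. The only points requiring attention are bookkeeping the two norms so that the Hardy inequality is invoked exactly once (on $f$ or on $A_rf$), which keeps the constant at $\frac{cp}{p-1}$ rather than $\frac{cp^2}{(p-1)^2}$, and observing at the outset that $f \in L^{p,q}(X)$ lies in the domain of Lemma~\ref{rearrange}, i.e.\ is integrable on sets of finite measure — immediate from (B5) once one uses that $(L^{p,q}(X),\|\cdot\|_{(p,q)})$ is a Banach function space.
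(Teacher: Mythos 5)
Your proposal is correct and follows essentially the same route as the paper's own proof: take $c$ from Lemma~\ref{rearrange} (equivalently Lemma~\ref{distribution}), use the pointwise bound $(A_rf)^\ast \leq c f^{\ast\ast}$ to get $\|A_rf\|_{p,q} \leq c\|f\|_{(p,q)}$, and then invoke the Hardy inequality $\|\cdot\|_{(p,q)} \leq \frac{p}{p-1}\|\cdot\|_{p,q}$ once (on $f$ or on $A_rf$ depending on which norm $\|\cdot\|_E$ denotes). Your explicit remarks on the domain of Lemma~\ref{rearrange} via (B5) and on bookkeeping the constant are welcome but the argument is the same.
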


\begin{proof}
Put $c \geq 2$ be as in Lemma~\ref{distribution}.
Combining Lemma~\ref{rearrange} with Lemma~3.4.6 of \cite{EE}, we can obtain that
\begin{align*}
 \|A_rf\|_{p,q} &= \bigg(\int_0^\infty t^{q/p - 1}(A_rf)^\ast(t)^q dt\bigg)^{1/q} \leq \bigg(\int_0^\infty t^{q/p - 1}(cf^{\ast\ast}(t))^q dt\bigg)^{1/q}\\
 &= c\|f\|_{(p,q)} \leq \frac{cp}{p - 1}\|f\|_{p,q}
 \end{align*}
 if $q < \infty$,
 and that
 $$\|A_rf\|_{p,q} = \sup_{t \in (0,\infty)} t^{1/p}(A_rf)^\ast(t)^q \leq \sup_{t \in (0,\infty)} t^{1/p}cf^{\ast\ast}(t) = c\|f\|_{(p,q)} \leq \frac{cp}{p - 1}\|f\|_{p,q}$$
 if $q = \infty$.
Moreover,
 $$\|A_rf\|_{(p,q)} \leq \frac{p}{p - 1}\|A_rf\|_{p,q} \leq \frac{cp}{p - 1}\|f\|_{(p,q)}.$$
The proof completes.
\end{proof}

We shall show the absolute continuity of $\|\cdot\|_E$.

\begin{prop}\label{abs.conti.}
Let $p \in (1,\infty)$ and $q \in [1,\infty]$.
The norm $\|\cdot\|_E$ is absolutely continuous if $p \in (1,\infty)$ and $q \in [1,\infty)$,
 and it is absolutely continuous with respect to $1$ if $p \in (1,\infty)$ and $q = \infty$.
\end{prop}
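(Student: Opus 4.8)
The plan is, for $q<\infty$, to pass to rearrangements and squeeze the norm of $f\chi_{A_i}$ down to $0$ by dominated convergence, and, for $q=\infty$, to compute the norm of a characteristic function directly. Since for $p\in(1,\infty)$ and $q\in[1,\infty]$ the two norms $\|\cdot\|_{p,q}$ and $\|\cdot\|_{(p,q)}$ on $L^{p,q}(X)$ are equivalent, the relation $\|f\chi_{A_i}\|_E\to 0$ holds for one of them exactly when it holds for the other, so I may work with whichever is convenient. I would first record, for $f\in L^{p,q}(X)$, the elementary finiteness facts $f^\ast(t)<\infty$ and $f^{\ast\ast}(t)<\infty$ for every $t>0$ (the latter using $p>1$, so that $\int_0^t f^\ast(s)\,ds=tf^{\ast\ast}(t)<\infty$), and consequently $\mu_f(s)<\infty$ for every $s>0$ (since $f^\ast(t)<\infty$ gives $\mu_f(s)\leq t<\infty$ whenever $s>f^\ast(t)$).

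Now let $\{A_i\}_{i\in\N}$ be measurable sets with $\chi_{A_i}\to 0$ a.e., fix $f\in L^{p,q}(X)$, and set $g_i=f\chi_{A_i}$; since $|g_i|\leq|f|$ a.e.\ we have $g_i^\ast\leq f^\ast$ and $g_i^{\ast\ast}\leq f^{\ast\ast}$ pointwise. For each fixed $s>0$, the set $\{x\in X\mid|f(x)|>s\}$ has finite measure, $\chi_{A_i}\to 0$ a.e.\ on it, and $\chi_{A_i}$ is dominated there by the integrable function $\chi_{\{|f|>s\}}$; hence $\mu_{g_i}(s)=\mu(A_i\cap\{|f|>s\})\to 0$ by dominated convergence. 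From $\mu_{g_i}(s)\to 0$ for every $s>0$ one deduces $g_i^\ast(t)\to 0$ for every $t>0$: given $t,\epsilon>0$, eventually $\mu_{g_i}(\epsilon)\leq t$, whence $g_i^\ast(t)\leq\epsilon$. A further application of dominated convergence on $[0,t]$ (using $g_i^\ast\leq f^\ast$ and $\int_0^t f^\ast(s)\,ds<\infty$) then gives $g_i^{\ast\ast}(t)=\tfrac1t\int_0^t g_i^\ast(s)\,ds\to 0$ for every $t>0$ as well.

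When $q<\infty$ I would finish by a last application of dominated convergence, to the outer integral defining the norm: the integrand $t^{q/p-1}g_i^\ast(t)^q$ (resp.\ $t^{q/p-1}g_i^{\ast\ast}(t)^q$) tends to $0$ for every $t>0$ and is dominated by $t^{q/p-1}f^\ast(t)^q$ (resp.\ $t^{q/p-1}f^{\ast\ast}(t)^q$), which is integrable since $\|f\|_{p,q}<\infty$ (resp.\ $\|f\|_{(p,q)}<\infty$); thus $\|g_i\|_E\to 0$. When $q=\infty$ the claim concerns absolute continuity with respect to the constant $1$, so $\mu(X)<\infty$; taking $f\equiv 1$, the formulas for $\chi_{A_i}^\ast$ and $\chi_{A_i}^{\ast\ast}$ already computed in the text yield $\|\chi_{A_i}\|_{p,\infty}=\|\chi_{A_i}\|_{(p,\infty)}=\mu(A_i)^{1/p}$ (here $1/p<1$ makes $t\mapsto t^{1/p}$ increasing and $t\mapsto\mu(A_i)t^{1/p-1}$ decreasing), while $\mu(A_i)=\int_X\chi_{A_i}\,d\mu\to 0$ by dominated convergence because $\mu(X)<\infty$. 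The only step requiring attention is the passage from $\mu_{g_i}(s)\to 0$ for each $s$ to $g_i^\ast(t)\to 0$ for each $t$, together with lining up the three dominating functions so that dominated convergence applies at each stage; the rest is routine manipulation of rearrangements.
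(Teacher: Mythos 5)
Your argument is correct, and its overall skeleton coincides with the paper's: reduce to one of the equivalent norms, control the rearrangement of $f\chi_{A_i}$, finish with dominated convergence for $q<\infty$, and compute $\|\chi_{A_i}\|$ explicitly for $q=\infty$. The genuine difference is in which hypothesis you work from. The paper's proof opens with ``fix any sequence $\{A_n\}$ with $\mu(A_n)\to 0$'' and then exploits the crude bound $(f\chi_{A_n})^\ast\leq f^\ast\chi_{[0,\mu(A_n))}$, which collapses precisely because the measures shrink; but the definition of absolute continuity given in the paper asks for sequences with $\chi_{A_i}\to 0$ a.e., and neither hypothesis implies the other (consider $A_i=[i,i+1]$ in $\R$ for one direction and a typewriter sequence for the other). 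You instead work directly with $\chi_{A_i}\to 0$ a.e.: you show superlevel sets $\{|f|>s\}$ have finite measure for $f\in L^{p,q}(X)$ with $p<\infty$, deduce $\mu_{f\chi_{A_i}}(s)=\mu(A_i\cap\{|f|>s\})\to 0$ by dominated convergence on that finite-measure set, convert this into pointwise convergence $(f\chi_{A_i})^\ast(t)\to 0$ and then $(f\chi_{A_i})^{\ast\ast}(t)\to 0$, and only then apply the final dominated convergence under the dominants $t^{q/p-1}f^\ast(t)^q$ and $t^{q/p-1}f^{\ast\ast}(t)^q$. This extra layer is exactly what is needed to make the proof match the stated definition verbatim, so your version is the more careful one; the paper's version is shorter but, as written, verifies the conclusion only for sequences whose measures tend to zero. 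Your treatment of the $q=\infty$ case (using $\mu(X)<\infty$, which is built into the notion of absolute continuity with respect to $1$, to get $\mu(A_i)\to 0$ from $\chi_{A_i}\to 0$ a.e.) closes the same small gap there. No errors to report; only the parenthetical justification that $\mu_f(s)<\infty$ for \emph{every} $s>0$ deserves one more line, namely that $f^\ast(t)\to 0$ as $t\to\infty$ (which follows from $\|f\|_{p,q}<\infty$ with $p<\infty$), so that some $t$ with $f^\ast(t)<s$ actually exists.
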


\begin{proof}
Fix any sequence $\{A_n\}$ of measurable sets in $X$ such that $\mu(A_n) \to 0$ as $n \to \infty$.
In the case that $p \in (1,\infty)$ and $q \in [1,\infty)$, for any $f \in L^{p,q}(X)$, $(f\chi_{A_n})^\ast(t) \leq f^\ast(t)$ if $t \in [0,\mu(A_n))$,
 and $(f\chi_{A_n})^\ast(t) = 0$ if $t \in [\mu(A_n),\infty)$.
As $n \to \infty$, by the Lebesgue dominated convergence theorem, 
 $$\|f\chi_{A_n}\|_{p,q} = \bigg(\int_0^\infty t^{q/p - 1}(f\chi_{A_n})^\ast(t)^q dt\bigg)^{1/q} = \bigg(\int_0^\infty t^{q/p - 1}(f\chi_{A_n})^\ast(t)^q\chi_{[0,\mu(A_n))}(t) dt\bigg)^{1/q} \to 0.$$
In the case that $p \in (1,\infty)$ and $q = \infty$, as $n \to 0$,
 $$\|\chi_{A_n}\|_{p,q} = \sup_{t \in (0,\infty)} t^{1/p}(\chi_{A_n})^\ast(t) = \sup_{t \in (0,\mu(A_n))} t^{1/p} = \mu(A_n)^{1/p} \to 0.$$
It follows from the equivalence between $\|\cdot\|_{p,q}$ and $\|\cdot\|_{(p,q)}$ that $\|f\chi_{A_n}\|_{(p,q)}$ and $\|\chi_{A_n}\|_{(p,q)}$ also tend to $0$.
We finish the proof.
\end{proof}

For every measurable subset $A \subset X$ of finite measure, when $1 \leq p \leq \infty$ and $1 \leq q < \infty$,
 it follows that
 $$\|\chi_A\|_{p,q} = \bigg(\int_0^\infty t^{q/p - 1}\chi_A^\ast(t)^p dt\bigg)^{1/q} = \bigg(\int_0^{\mu(A)} t^{q/p - 1} dt\bigg)^{1/q} = \bigg(\frac{p}{q}\bigg)^{1/q}(\mu(A))^{1/p},$$
 and that
 \begin{align*}
  \|\chi_A\|_{(p,q)} &= \bigg(\int_0^\infty t^{q/p - 1}\chi_A^{\ast\ast}(t)^p dt\bigg)^{1/q} = \bigg(\int_0^{\mu(A)} t^{q/p - 1} dt + \int_{\mu(A)}^\infty t^{q/p - 1}\bigg(\frac{\mu(A)}{t}\bigg)^q dt\bigg)^{1/q}\\
  &= \bigg(\frac{p^2}{q(p - 1)}\bigg)^{1/q}(\mu(A))^{1/p}.
 \end{align*}
Moreover, when $1 \leq p \leq \infty$ and $q = \infty$,
 observe that
 $$\|\chi_A\|_{p,q} = \sup_{t \in (0,\infty)} t^{1/p}\chi_A^\ast(t) = \sup_{t \in (0,\mu(A))} t^{1/p} = (\mu(A))^{1/p},$$
 and that
 \begin{align*}
  \|\chi_A\|_{(p,q)} &= \sup_{t \in (0,\infty)} t^{1/p}\chi_A^{\ast\ast}(t) = \sup_{t \in [\mu(A),\infty)} t^{1/p - 1}\mu(A) = (\mu(A))^{1/p}.
 \end{align*}
Combining the Hardy-Littlewood inequality with the H\"{o}lder inequality on Lebesgue spaces, we can establish the H\"{o}lder inequality on Lorentz spaces.
Thus for any $p, q \in [1,\infty]$ and for any $f \in L^{p,q}(X)$,
\begin{align*}
 \int_A |f(x)| d\mu(x) &= \int_X |f(x)\chi_A(x)| d\mu(x) \leq \|f\|_{p,q}\|\chi_A\|_{p/(p - 1),q/(q - 1)}\\
 &\leq (\mu(A))^{1 - 1/p}\bigg(\frac{p(q - 1)}{q(p - 1)}\bigg)^{1 - 1/q}\|f\|_{p,q}.
\end{align*}
Now we will prove Corollary~\ref{equiv.lorentz}.

\begin{proof}[Proof of Corollary~\ref{equiv.lorentz}]
By Proposition~\ref{oper.bdd.}, $A_r(L^{p,q}(X)) \subset L^{p,q}(X)$.
Due to the H\"{o}lder inequality and Lemma~3.4.6 of \cite{EE}, for any measurable set $A \subset X$ with $\mu(A) < \infty$,
 $$\int_A |f(x)| d\mu(x) \leq (\mu(A))^{1 - 1/p}\bigg(\frac{p(q - 1)}{q(p - 1)}\bigg)^{1 - 1/q}\|f\|_E.$$
Letting $\lambda = (p(q - 1)/(q(p - 1)))^{1 - 1/q}$, we consider $\alpha(A) = \lambda(\mu(A))^{1 - 1/p}$ in the condition (B5).
First, we will prove the ``if'' part.
Using Lemma~\ref{doubling}, we have that $\mu(X) < \infty$ and $\inf\{\mu(B(x,r)) \mid x \in X\} > 0$ according to the boundedness and the $s$-doubling property of $X$ for every $s \geq r$.
By the assumption, for each $x \in X$,
 $$\alpha(B(x,r) \triangle B(y,r)) = \lambda(\mu(B(x,r) \triangle B(y,r)))^{1 - 1/p} \to 0$$
 as $y \to x$,
 and moreover $\mu(B(y,r)) \to \mu(B(x,r))$, refer to \cite[Corollary~2.1]{GG}.
Recall that the norm $\|\cdot\|_E$ is absolutely continuous by Proposition~\ref{abs.conti.}.
Therefore the ``if'' part follows from Theorem~\ref{suff.}.

Next, we shall show the ``only if'' part.
Due to the $r$-doubling property of $X$, there is $\gamma \geq 1$ such that $\mu(B(x,2r)) \leq \gamma\mu(B(x,r))$ for any $x \in X$.
Observe that
\begin{align*}
 \bigg\|\frac{\alpha(B(x,r))\chi_{B(x,2r)}}{\mu(B(x,r))}\bigg\|_E &= \bigg\|\frac{\lambda(\mu(B(x,r)))^{1 - 1/p}\chi_{B(x,2r)}}{\mu(B(x,r))}\bigg\|_E = \lambda(\mu(B(x,r)))^{-1/p}\|\chi_{B(x,2r)}\|_E\\
 &= \left\{
 \begin{array}{l}
  \lambda(\mu(B(x,r)))^{-1/p} \cdot \Big(\frac{p}{q}\Big)^{1/q}(\mu(B(x,2r)))^{1/p} \leq \frac{p}{q}\Big(\frac{q - 1}{p - 1}\Big)^{1 - 1/q}\gamma^{1/p}\\
  \ \ \ \ \ \ \ \ \ \ \ \ \ \ \ \ \ \ \ \ \ \ \ \ \ \ \ \ \ \ \ \ \ \ \ \ \ \ \ \ \ \ \ \ \ \ \ \ \ \ \ \ \ \ \ \ \text{if } \|\cdot\|_E = \|\cdot\|_{p,q},\\
  \lambda(\mu(B(x,r)))^{-1/p} \cdot \Big(\frac{p^2}{q(p - 1)}\Big)^{1/q}(\mu(B(x,2r)))^{1/p} \leq \frac{p(q - 1)}{q(p - 1)}\Big(\frac{p}{q - 1}\Big)^{1/q}\gamma^{1/p}\\
  \ \ \ \ \ \ \ \ \ \ \ \ \ \ \ \ \ \ \ \ \ \ \ \ \ \ \ \ \ \ \ \ \ \ \ \ \ \ \ \ \ \ \ \ \ \ \ \ \ \ \ \ \ \ \ \ \text{if } \|\cdot\|_E = \|\cdot\|_{(p,q)}.
 \end{array}
 \right.
\end{align*}
Applying Theorem~\ref{nec.}, we can prove the ``only if'' part.
The proof is completed.
\end{proof}

\subsection*{Acknowledgements}

The author would like to thank Professor Przemys{\l}aw G\'{o}rka,
 who suggested this problem, for his useful comments.

\end{document}